\newcommand{\zz}{\ensuremath{\mathbb{Z}}}
\newcommand{\qq}{\ensuremath{\mathbb{Q}}}
\newcommand{\cc}{\ensuremath{\mathbb{C}}}
\newcommand{\pp}{\ensuremath{\mathfrak{p}}}
\newcommand{\ord}{\textnormal{ord}}
\newcommand{\pideal}{\ensuremath{\mathfrak{p}}}
\newcommand{\Pideal}{\ensuremath{\mathfrak{P}}}
\theoremstyle{plain}
\newtheorem{Th}{Theorem}[section]
\newtheorem{Lemma}[Th]{Lemma}
\newtheorem{Cor}[Th]{Corollary}
\theoremstyle{definition}
\newtheorem{?}[Th]{Problem}
\newtheorem{Ex}[Th]{Example}
\newcommand{\SL}{\operatorname{SL}}
\begin{document}
	
	\title{Divisors of Modular Parametrizations of Elliptic Curves}

	\author{Michael Griffin and Jonathan Hales}
	
	\address{Brigham Young University \\ Department of Mathematics \\
		Provo, UT 84602 United States of America }
	
	\email{Mjgriffin@math.byu.edu , jhales@mathematics.byu.edu}
	
	\keywords{elliptic curves, modular forms, number theory}

	\begin{abstract} 
		The modularity theorem implies that for every elliptic curve $E /\mathbb{Q}$ there exist rational maps from the modular curve $X_0(N)$ to $E$, where $N$ is the conductor of $E$. These maps may be expressed in terms of pairs of modular functions $X(z)$ and $Y(z)$ where $X(z)$ and $Y(z)$ satisfy the Weierstrass equation for $E$ as well as a certain differential equation. Using these two relations, a recursive algorithm can be used to calculate the $q$ - expansions of these parametrizations at any cusp. 
		 Using these functions, we determine the divisor of the parametrization and the preimage of rational points on $E$. We give a sufficient condition for when these preimages correspond to CM points on $X_0(N)$. We also examine a connection between the algebras generated by these functions for related elliptic curves, and describe sufficient conditions to determine congruences in the $q$-expansions of these objects.
	\end{abstract}
	
	\maketitle
	
	\section{Introduction and statement of results} 
	The modularity theorem \cite{Wiles,Modularity} guarantees that for every elliptic curve $E$ of conductor $N$ there exists a weight $2$ newform $f_E$ of level $N$ with Fourier coefficients in $\zz$. The Eichler integral of $f_E$ (see (\ref{defEichler})) and the Weierstrass $\wp$-function together give a rational map from the modular curve $X_0(N)$ to the coordinates of some model of $E.$ 
This parametrization has singularities wherever the value of the Eichler integral is in the period lattice.
 Kodgis \cite{Kodgis} showed computationally that many of the zeros of the Eichler integral occur at CM points. Peluse \cite{Peluse} later proved several general cases confirming many of these conjectured zeros using the theory of Hecke operators and Atkin–Lehner involutions.
	
	In \cite{AGOR}, the authors use the modular parametrization of an elliptic curve to give a harmonic Maass form of weight $3/2$ whose Fourier coefficients encode the vanishing of central $L$-values and $L$-derivatives of quadratic twists of the curve. The Birch and Swinerton-Dyer conjecture asserts that the order of vanishing of the central $L$-value of an elliptic curve is the rank of the curve. Kolyvagin\cite{Kolyvagin} confirmed this conjecture if the order of vanishing is less than $2$. Unfortunately, the result of \cite{AGOR} is only fully constructive if the modular parametrization is holomorphic on the upper half plane. Otherwise we must remove the singularities, a task which is difficult without knowledge of their locations.

	\par For a modular function $F$ for some subgroup $\Gamma$ of $\SL_{2}(\zz)$, 
	 we consider the \emph{modular polynomial} of $F$ 
	
	\begin{equation}\label{ModPoly}
	\Phi_F(x) := \prod_{\gamma \in \Gamma \backslash SL_{2}(\zz)}\Big{(}x - F(\gamma z)\Big{)} = \sum A_i(z)x^{i}.
	\end{equation}
	
	\noindent One of our goals is to calculate the minimal divisor of $(\ref{ModPoly})$ for $F$ which are rational in terms of the coordinates functions $(X(z),Y(z))$ of a given modular parametrization of $E$, chosen so as to have poles at the divisor of the parametrization.  
We may calculate the divisor by calculating the divisor of the coefficient functions $A_i(z)$. 
 In order to calculate the product in $(\ref{ModPoly})$ we need the expansion of $F$ at each of the cusps of $\Gamma$. Algorithms for calculating the coefficients of $X(z)$ and $Y(z)$ at the cusp infinity are described by Cremona \cite{Cremona}, and we include a variation of that method that allows for the computation of coefficients at any cusp.
	
	\begin{Ex}
		For the elliptic curve \begin{equation}
		E: y^2+y = x^3-x^2 -10x-20 \tag{11a1}
		\end{equation} 
		one can calculate that $E$ has $(5,5)$ and $(5,-6)$ as points of order $5$. If we set $F(z) = \left( X(z)-5 \right)^{-1}$, then $F(z)$ has zeros only when $z$ is an element of the complex lattice associated to $E$, and poles only when $z$ is mapped to one of these $5$-torsion points. Computing the divisor of $\Phi_F(X)$, we find that 
		\[X(z) = 5 \quad \implies \quad (j(z)+24729001)(j(z)+32768)=0.\]
		If $z = \frac{1+\sqrt{-11}}{2}$, then  $j(z) = -32768$. Since $j(z)$ is invariant under the action of $\SL_2(\zz)$ while $F$ is only $\Gamma_0(11)$ invariant, we look at the $\Gamma_0(11)\backslash \SL_2(\zz)$ orbit of $z$ to find
		
		\[z_0 = \frac{-11 +\sqrt{-11}}{55} \quad \implies \quad \big{(}X(z_0),Y(z_0)\big{)} = (5,5).\]
		Thus the point $z_0$ is a preimage of the rational  point $(5,5)$, and is a CM point on $X_0(11)$.  
	\end{Ex} 
	The points of $X_0(N)$ are in correspondence with pairs $(e,c)$ where $e$ is an elliptic curve and $c\subset e$ is a cyclic subgroup of order $N$ (See Appendix C.13 of \cite{Silverman}). Using this description, we give a sufficient condition for when a point $\mathcal{P}$ lying above a rational point $P$ on $E$ is a CM point. The proof is given in section $3$.

	\begin{Th}
		Fix an elliptic curve $E/\qq$ of conductor $N$ and $P$ a point on $E$. Let $\mathcal{P}$ a point on $X_0(N)$ that maps to $P$ under some modular parametrization, and which is in correspondence to the pair $(e,c)$ where $e$ is an elliptic curve over a number field $K$. For each $m \mid \mid N$, either $e$ admits an $m$-isogeny defined over $K$ or $e$ has CM by an order of discriminant $D$ where $0 \leq -D \leq 4m$ and $D$ is a square $\pmod{4m}$.
	\end{Th}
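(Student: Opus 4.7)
The plan is to use the moduli-theoretic interpretation of $X_0(N)$---points correspond to pairs $(e,c)$ of an elliptic curve $e$ and a cyclic subgroup $c$ of order $N$ up to isomorphism---combined with the Atkin-Lehner (Fricke) involution $w_m$, which is defined over $\qq$ precisely because $m \mid \mid N$. Since $\gcd(m, N/m) = 1$, the subgroup $c$ decomposes uniquely as $c = c_m \oplus c_{N/m}$ with $c_m$ cyclic of order $m$, and $w_m\mathcal{P}$ corresponds to $(e/c_m, c')$ for a suitable $c'$. The natural quotient $\pi\colon e \to e/c_m$ is an $m$-isogeny over $\bar K$, and the dichotomy in the theorem will hinge on whether this isogeny---equivalently, the subgroup $c_m$---is defined over $K$.

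First I would handle the easy case: if $c_m$ is $\mathrm{Gal}(\bar K/K)$-stable, then $\pi$ is an $m$-isogeny defined over $K$, giving the first alternative. Otherwise, choose $\sigma \in \mathrm{Gal}(\bar K/K)$ with $c_m^\sigma \neq c_m$. Since $\mathcal{P}$ is $K$-rational on $X_0(N)$ and $e$ is over $K$, the isomorphism $(e,c^\sigma)\cong(e,c)$ must be realized by some $\alpha_\sigma\in\mathrm{Aut}_{\bar K}(e)$; if every such $\alpha_\sigma$ were $\pm 1$, then $c$ (and hence $c_m$) would be $K$-rational, a contradiction. Hence $\mathrm{Aut}_{\bar K}(e) \supsetneq \{\pm 1\}$, so $j(e)\in\{0,1728\}$ and $e$ has CM by $\zz[i]$ or $\zz[\omega]$. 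The $K$-rationality of $w_m\mathcal{P}$ (which holds because $w_m$ is defined over $\qq$) further supplies an isomorphism $\psi\colon e/c_m^\sigma\to e/c_m$ over $\bar K$, and the composition $\hat\pi\circ\psi\circ\pi^\sigma\colon e\to e$ is an endomorphism of degree $m^2$ with kernel $e[m]$, so it equals $\gamma\circ[m]$ for some $\gamma\in\mathrm{Aut}_{\bar K}(e)$. A direct check shows $\gamma\ne\pm 1$: otherwise $\hat\pi(\psi\circ\pi^\sigma\mp\pi)=0$ would make $\psi\circ\pi^\sigma\mp\pi$ factor as a homomorphism from the connected group $e$ to the finite group $e[m]/c_m$, hence zero, forcing $c_m^\sigma = c_m$. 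Refining this construction produces an element $\alpha\in\mathcal{O}_D := \mathrm{End}_{\bar K}(e)$ of norm $m$ whose kernel in $e$ is cyclic of order $m$.

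Finally, writing $\mathcal{O}_D=\zz[\tau]$ with $\tau=(d+\sqrt{D})/2$ and $d\equiv D\pmod 2$, and expanding $\alpha=a+b\tau$, the norm equation $N(\alpha)=m$ becomes $(2a+bd)^2-b^2D=4m$. Cyclicity of $\ker\alpha$ forces $\alpha$ to be primitive, whence $b\ne 0$, so $-D\le 4m/b^2\le 4m$. Reducing modulo $4m$ gives $(2a+bd)^2\equiv b^2D\pmod{4m}$, and a standard local analysis at each prime dividing $4m$, using the primitivity of $\alpha$, concludes that $D$ is a square modulo $4m$. The main obstacle will be the refinement in the previous paragraph that extracts a primitive norm-$m$ element $\alpha$ from the degree-$m^2$ endomorphism $\hat\pi\circ\psi\circ\pi^\sigma$; this requires a careful analysis of how the $\mathcal{O}_D$-module structure on $e[m]$ interacts with the cyclic subgroup $c_m$, in particular to rule out the possibility that the natural candidate for $\alpha$ factors as a proper isogeny composed with a nontrivial rational integer.
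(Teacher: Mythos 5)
Your overall framework (the moduli interpretation of $X_0(N)$ together with the Atkin--Lehner involution $w_m$) is the right one, but the dichotomy you split on is not the one that proves the theorem, and this leaves a genuine gap. The paper splits on whether $w_m$ fixes the underlying curve, i.e.\ whether $e/c_m\cong e$: if not, $w_m$ supplies the $m$-isogeny; if so, one obtains $W_m\tau=A\tau$ for some $A\in\SL_2(\zz)$ with $W_m=\left(\begin{smallmatrix}am&b\\cN&dm\end{smallmatrix}\right)$ of determinant $m$, and the resulting integral quadratic in $\tau$ has discriminant $s^2-4m$ --- this single identity is what simultaneously yields $0\le -D\le 4m$ \emph{and} $D\equiv s^2\pmod{4m}$. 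You instead split on whether $c_m$ is Galois-stable. Your case 1 is fine, but your case 2 concludes $\mathrm{Aut}_{\bar K}(e)\supsetneq\{\pm1\}$, hence $j(e)\in\{0,1728\}$ and $D\in\{-3,-4\}$, and these discriminants simply do not satisfy the theorem's congruence condition: $-4$ is not a square mod $12$ (take $m=3$), and $-3$ is not a square mod $20$ (take $m=5$). So in your case 2 you have established neither alternative of the theorem, and the case that actually produces the stated discriminant bounds --- $e\cong e/c_m$ --- never appears in your argument.

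The step you yourself flag as the main obstacle, refining $\hat\pi\circ\psi\circ\pi^\sigma$ into a primitive element $\alpha\in\mathrm{End}_{\bar K}(e)$ of norm $m$, is not merely delicate; it is impossible in general. Once you have forced $\mathrm{End}_{\bar K}(e)=\zz[i]$ or $\zz[\omega]$, an element of norm $m$ exists only when $m$ is represented by $x^2+y^2$ (resp.\ $x^2+xy+y^2$), which fails already for $m=3$ (resp.\ $m=5$); an automorphism times $[m]$ has norm $m^2$ and cannot be ``divided'' to norm $m$. (A smaller issue: the claim that $\hat\pi\circ\psi\circ\pi^\sigma$ has kernel exactly $e[m]$ needs $\psi$ to carry $\ker\widehat{\pi^\sigma}$ to $\ker\hat\pi$, i.e.\ to respect the Atkin--Lehner level structures, which you should make explicit.) The repair is to discard the Galois-stability dichotomy and compare $\mathcal{P}$ with $w_m\mathcal{P}$ directly: when the underlying curves are non-isomorphic you are in the isogeny alternative, and when they are isomorphic the composite $e\to e/c_m\xrightarrow{\sim}e$ is a primitive endomorphism of norm $m$, at which point the norm computation in your final paragraph applies correctly and recovers the paper's $D=s^2-4m$.
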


	\par In section $4$ we consider the question, given an elliptic curve $E$, when are the coefficients of these parametrizations contained in some prime ideal $\mathfrak{p}$ of a number ring $\mathcal{O}$? One sufficient condition we give is that the elliptic curves are isogenous, and have congruent coefficients mod $p$ for some prime $p$ lying below $\mathfrak{p}$. Another sufficient condition we provide is a bound similar to Sturm's bound that implies that every coefficient of the parametrizations are in $\mathfrak{p}$ after a certain finite number of coefficients are. 


	\section{Elliptic Curves}
	Given an elliptic curve $E$, 
	we denote the periods of $E$ by $\omega_1,\omega_2$, and the period lattice they generate by $\Lambda_E$. The Weierstrass $\wp$ function is defined in terms of $\Lambda_E$ and a complex variable $z$ as follows:	
	\[\wp(z,\Lambda_E) := \frac{1}{z^2} + \sum_{\substack{\lambda \in \Lambda_E \\ \lambda \neq 0}} \frac{1}{(z+\lambda)^2}-\frac{1}{\lambda^2}.\]
	
	The $\wp$-function \noindent $\wp(z,\Lambda_E)$ is even as a function of $z$, and its defining series is absolutely convergent and doubly periodic with periods $\omega_1,\omega_2$. The functions $\wp(z,\Lambda_E)$ and $\wp'(z,\Lambda_E)$ satisfy the relation
	\begin{equation}\label{DifEq}
	\wp'(z,\Lambda_E)^2 = 4\wp(z,\Lambda_E)^3 - g_2\wp(z,\Lambda_E)-g_3,
	\end{equation}
	
	\noindent where \[g_2 = g_2(\Lambda_E)  = 60 \sum_{\substack{\lambda \in \Lambda_E \\\lambda \neq 0}} (\lambda)^{-4} \]
	
	\noindent and \[g_3 = g_3(\Lambda_3) = 140 \sum_{\substack{\lambda \in \Lambda_E \\ \lambda \neq 0}} (\lambda)^{-6}.\]
	
	Also associated $E$ is the canonical differential 
	\[\omega = mf_E(z)dz,\]
	where $m$ is the Manin constant and $f_E$ is the weight two cusp form associated to $E$. 
	The Eichler integral is then defined as 
	
	\begin{equation}\label{defEichler}
	\varepsilon(z)=\int_z^{i\infty}\omega = \int_{z}^{i\infty}mf_E(\tau)d\tau.
	\end{equation}
	The function $\varepsilon(z)$ is not modular, but if 
	$\gamma = \left(\begin{smallmatrix}
	a & b \\
	c& d
	\end{smallmatrix}\right) \in \Gamma_0(N)$ acts as usual on the upper-half plane, then 
	
	\begin{align*}
	\frac{d}{dz} \big{(}\varepsilon(\gamma z)-\varepsilon(z)\big{)} &= \frac{d}{dz} 2\pi i\int_{\gamma z}^{z}mf_E(\tau)d\tau \\
	&=2\pi i m\big{(}f_E(z) - (cz+d)^2f_E(z)(cz+d)^{-2}\big{)} = 0
	\end{align*}
	
	\noindent where the second to last equality follows from the fundamental theorem of calculus and the modularity of $f_E$. So $\varepsilon(z)$ is \textit{almost} modular, in that the difference $\varepsilon(\gamma z)-\varepsilon(z)$ depends only on $\gamma$, and not on $z$. Denote this difference by  \[C(\gamma) := \varepsilon(\gamma z)-\varepsilon(z).\] One readily verifies that $C:\Gamma_0(N) \rightarrow m\Lambda_E$ is a group homomorphism. Eichler and Shimura \cite{Eichler,Shimura}  showed that when the Manin constant is $1$, that $C$ is actually an isomorphism. 
	
	For any $\lambda \in \cc$ such that $\lambda \in \textnormal{End}(E)$, we have that $\lambda \Lambda_E \subseteq \Lambda_E$. So it is possible to define 
		\[\wp_{\lambda}(z,\Lambda_E):= \lambda^2\wp(\lambda z,\Lambda_E) = \wp(z,\frac{1}{\lambda}\Lambda_E),\]
	\noindent where the extra factor $\lambda^2$ normalizes $\wp_{\lambda}$ to have a leading coefficient of $q^{-2}$ in its Fourier expansion. Similarly, 
	\[\wp_{\lambda}'(z,\Lambda_E):= \lambda^3\wp'(\lambda z,\Lambda_E) = \wp'(z,\frac{1}{\lambda}\Lambda_E).\]
	
	With this notation we define
		\[
	X_\lambda(z) =  m^2\wp_{\lambda}(\varepsilon(z),\Lambda_E) - \frac{a_1^2+4a_2}{12}, \]
	\[Y_\lambda(z) = \frac{m^3}{2}\wp_{\lambda}'(\varepsilon(z),\Lambda_E)-\frac{a_1 m^2}{2}\wp_{\lambda}(\varepsilon(z),\Lambda_E) +\frac{a_1^3+4a_1a_2-12a_3}{24}\]
	for $E$ given in general Weierstrass form with the convention that if the subscript $\lambda$ is omitted we take $\lambda = 1$. 
	Note that if $E$ is given in Wierstrass short form then 	\[X_\lambda(z) := m^2\wp_\lambda( \varepsilon(z),\Lambda_E) \quad Y_\lambda(z) := \frac{m^3}{2}\wp_\lambda'(\varepsilon(z),\Lambda_E).\]
	By construction $X_\lambda(z), Y_\lambda(z)$ satisfy the Wierstrass equation for the elliptic curve.
	Importantly, $X_\lambda(z)$ and $Y_\lambda(z)$ are modular over $\Gamma_0(N)$ since
	\[\wp_\lambda (\varepsilon(\gamma z),\Lambda_E) = \wp_\lambda( \varepsilon(z)+C(\gamma),\Lambda_E) = \wp_{\lambda}(\varepsilon(z),\Lambda_E)\]
	where the final equality holds because $\lambda C(\gamma) \in \Lambda_E$. A similar calculation holds for $Y_\lambda(z)$ as well as the parametrizations for the general form.

	\section{Expansions at Other Cusps}
	The first step in computing the coefficient functions $A_i$ in $(\ref{ModPoly})$ is to compute the $q$-expansions of each of the factors $(x-F(\gamma z))$ for $x$ a formal variable and $\gamma \in \SL_2(\zz)$.
	Since we are interested specifically in $F$ that are rational functions of $X_{\lambda}(z)$ and $Y_{\lambda}(z)$ it suffices to calculate the $q$-expansions for $X(\gamma z)$ and $Y(\gamma z)$.
	These coefficients are determined by two relations, 
	\begin{equation}\label{DifRel}
	qX' = (2Y + a_1X+a_3)f_E
	\end{equation}
	known as the invarient differential of $E$(see section \textrm{III} of  \cite{Silverman}), and the elliptic curve relation
	
	\begin{equation}
	Y^2 + a_1XY + a_3Y = X^3 + a_2X^2 +a_4X + a_6.
	\end{equation}
	
	\noindent A recursive algorithm was given by Cremona \cite{Cremona} using these two relations to calculate the expansions of $X(z)$ and $Y(z)$. Acting on $(\ref{defEichler})$ and $(\ref{DifRel})$ by a matrix $\gamma \in \SL_2(\zz)$ gives relations that allow us to recursively calculate the coefficients of modular parametrizations around cusps other than infinity. There are, however, a few complications we examine below.
	
	If we let $q_N(z) = e^{\frac{2\pi i }{N}z}$, we can write the expansions of the modular parametrizations at a cusp $\rho$ with width $w$ as $X_\lambda(\gamma z) = \sum_{n=-2}^\infty b_nq_w^n$ and $Y_\lambda(\gamma z) = \sum_{n=-3}^{\infty} d_nq_w^n$. Note that $b_i,d_i$ might be zero for $i =-3, -2,-1$ if neither $X$ nor $Y$ have poles at $\rho$. By examining the first few terms if the Laurent series of $\wp_{\lambda}$ and $\wp_\lambda'$ and evaluating them at $\varepsilon(\gamma z)$ we can calculate $b_{-2}$ and $d_{-3}$. So our inductive set up will be to assume that we know the $b_i$ coefficients for $-2\leq i \leq n-1$ and the $d_j$ coefficients for $-3\leq j \leq n-2$ and use this information to calculate $b_{n}$ and $d_{n-1}$. Letting $c_n$ denote the coefficient of $q_w^n$ of $f_E(\gamma z)$, relation $(\ref{defEichler})$ gives us that 
	
	\[\frac{1}{w}\sum_{n=-2}^{\infty}nb_{n}q_w^n = \Big{(} 
	2\sum_{n=-3}^{\infty}d_nq_w^n + a_1\sum_{n=-2}^{\infty}b_nq_w^n + a_3
	\Big{)}\sum_{n=1}^{\infty}c_nq_w^n.\]
	Comparing the coefficients of $q_w^n$ gives us one linear relation between $b_n$ and $d_{n-1}$
	
	\[nb_n = 2w\sum_{k=-3}^{n-1}c_{n-k}d_k + a_1w\sum_{k=-2}^{n-1}c_{n-k}b_{k}+a_3wc_n.\]
	
	\noindent Comparing the $q_w^{n-4}$ term in $(\ref{DifRel})$ gives us
	\begin{multline*}
	\sum_{k=-3}^{n-1}d_{n-4-k}d_k + a_1 \sum_{k=-3}^{n-4}b_{n-4-k}d_k + a_3d_{n-4} = \\
	\sum_{k = -2}^{n}\sum_{j=-2}^{n-2-k}b_{n-4-k-j}b_jb_k + a_2\sum_{k=-2}^{n-2}b_{n-4-j}b_j + a_4b_{n-4} + a_6^*
	\end{multline*}
	
	\noindent where $a_6^*$ indicates that this term is present only if $n-4 = 0$. This gives a second linear relation between $d_{n-1}$ and $b_{n}$, which allows us to solve for $d_{n-1}$ and $b_n$ uniquely whenever the determinant of the system is not $0$, i.e. when $-2nd_{-3}^2+6wc_1b_{-2}^2 \not=0$. Supposing that $X_\lambda(z)$  has a pole at $\rho$, (so that neither $d_{-3}$ nor $b_{-2}$ are $0$), then 
	
	\[-2n(d_{-3})^2+6wc_1(b_{-2})^2 = 0 \quad \implies n = \frac{3wc_1(b_{-2})^2}{(d_{-3})^2}.\]
	So this recursive process will not fail if we can find the first $\frac{3wc_1(b_{-2})^2}{(d_{-3}^2)}$ nontrivial terms of $X(z)$ and $Y(z)$ via the Laurent series expansions of $\wp_\lambda$ and $\wp_\lambda'$. Note that when $\rho = \infty$, we have that $w= c_1 = b_{-2} = d_{-3} = 1$ so that Cremona's algorithm doesn't fail with simply $3$ known terms of the Laurent expansion of $\wp_{\lambda}(\varepsilon(z))$.  
	
	However, if there are no poles at $\rho$, then $d_{-3} = b_{-2} = 0$, and the determinant will be $0$ for all $n$. So when calculating the $q_w$-expansions around cusps without poles, we need to compare other powers of $q_w$ to get information about such systems. Fortunately, we can simply compare powers of $q_{w}^n$ in $(\ref{defEichler})$ and $(\ref{DifRel})$ to get that a system with determinant $n(2d_0+a_1b_0+a_3)$.

	Interestingly, this determinant is zero when $2d_0+a_1b_0+a_3 = 0$, i.e when the constant terms of the expansions give a point of order $2$ on $E$. This is seen most easily by looking at $(\ref{defEichler})$, and observing that $2d_0+a_1b_0+a_3 = 0$ corresponds to a vertical tangent line on $E$. However, this is easily rectified. We first take $2d_0+a_1b_0+a_3 = 0$ as a hypothesis and compare powers of $q_w^{n}$ in $(\ref{defEichler})$ and powers of $q_{w}^{n}$ in $(\ref{DifRel})$ exactly like the previous case. The main difference is that since $2d_0+a_1b_0+a_3 = 0$, this gives us a system in the unknowns $b_n$ and $d_{n-1}$ instead of in terms of $b_n$ and $d_{n}$. So by examining $3$ cases we can effectively calculate the $q_w$-expansions of the modular parametrizations $X(z)$ and $Y(z)$ around any cusp. 
	
	Now that we can efficiently calculate these $q$-expansions for $X(\gamma z), Y(\gamma z)$ it is possible to construct 
	
	\[\Phi_F(x) := \prod_{\gamma \in \Gamma_0(N)\backslash SL_{2}(\zz)}\Big{(}x - F(\gamma z)\Big{)} = \sum A_i(z)x^{i}\]
	where $x$ is a formal variable and $F$ is any rational function in $X_{\lambda}(z)$ and $Y_{\lambda}(z)$. Note that by construction, the coefficients of $\Phi_F(x)$ are modular functions which are invariant under the action of $\SL_2(\zz)$, and so are rational functions in Klein's $j$-function. 
	
	In practice, in order to compute the minimal divisor of $\Phi_F(x)$ it is computationally advantageous to compute each of the functions $F(\gamma z)$ and then use symmetric polynomials to calculate the necessary coefficient functions until we locate all the poles of $F$.
\begin{Ex}
		Consider the elliptic curve 
		\begin{equation}
		E: y^2 + xy + y = x^3-x^2-3x+3. \tag{26b1}
		\end{equation}
		The point $(1,0)$ lies on $E$ and has $(1,-2)$ as its inverse. Then looking at the function $F(z) = \frac{Y(z)+2}{X(z)-1}$, we see that $F$ has a simple pole at the values $z \in \mathcal{H}$ that map $(X(z),Y(z))$ to  $(1,0)$. Note that the conductor of $E$ is $26$, and $[\SL_2(\zz):\Gamma_0(26)] = 42$. Calculating the trace of $\Phi_F$ (or the coefficient $A_{41}(z)$) we get 
		\[\sum_{\gamma \in \Gamma_0(26)\backslash \SL_2(\zz)}F(\gamma z) = \frac{-j(z)^2+54688 j(z) -37627200}{j(z) - 54000}.\]
		Testing the $42$ cosets of $\Gamma_0(26)$ in $\SL_2(\zz)$ gives us that for $z_0 = \frac{-7+\sqrt{-3}}{52}$, $\left(X(z_0),Y(z_0)\right) = (1,0)$. Thus the preimage of the rational point $(1,0)$ is a CM point on $X_0(26)$. 
	\end{Ex}
	
	Using this theory we are able to give a condition for when a point $P$ on an elliptic curve $E$ is the image of a CM point $\mathcal{P}$ on the modular curve and prove Theorem $1.2$.
	\begin{proof}
		Suppose that $m$ exactly divides $N$ and let $\mathcal{P}_2 = (e_2,c_2)$ be the image of $\mathcal{P}_1 = (e_1,c_1)$ under the Atkin-Lehner involution $W_m = \left(\begin{smallmatrix}
		am & b\\
		cN & dm
		\end{smallmatrix}\right)$ for integers $a,b,c,d$. The matrix $W_m$ imposes a rational map from $X_0(N)$ to itself, so if $e_1$ is not isomorphic to $e_2$, then $W_m$ is a rational isogeny of the curves $e_1$ and $e_2$. If $e_1$ is isomorphic to  $e_2$ and we write the periods for $e_1$, $e_2$ as $\omega_{11}, \omega_{12}$ and $\omega_{21}, \omega_{22}$ respectively, then $W_m$ takes $\tau_1 = \frac{\omega_{12}}{\omega_{11}}$ to $\tau_2 = \frac{\omega_{22}}{\omega_{21}}$. However, since $e_1 \cong e_2$, there must be a matrix $ A = \left(\begin{smallmatrix} 
		\alpha & \beta\\
		\gamma & \delta
		\end{smallmatrix}\right)$ in $\SL_2(\zz)$ such that $W_m \tau_1 = \tau_2 = A \tau_1$. This gives a quadratic relation that $\tau_1$ satisfies, namely 
		\[(am\tau_1 + b)(\gamma \tau_1 + \delta) = (\alpha\tau_1 + \beta)(cN\tau_1+dm).\]
		
		\noindent Expanding and collecting like terms gives 
		\[(am\gamma-c\alpha N)\tau_1^2 + (b\gamma+am\delta-cN\beta-dm\alpha)\tau_1+b\delta-dm\beta = 0.\]
		
		\noindent The discriminant of this quadratic is 
		\begin{align*}
		D &= (b\gamma +am\delta -cN\beta - dm\alpha)^2-4(am\gamma-c\alpha N)(b\delta-dm\beta) \\
		&= b^2\gamma^2 + a^2m^2\delta^2 + c^2N^2\beta^2+d^2m^2\alpha^2\\
		&+2b\gamma a m \delta - 2b\gamma cN\beta-2b\gamma d m \alpha -2am\delta c N \beta - 2adm^2\alpha \delta + 2cN\beta d m \alpha \\
		&-4( am\gamma b \delta - am^2d\beta \gamma - cNb\alpha \delta +  c\alpha N d m \beta).
		\end{align*}
		We collect like terms and use the fact that $\det(W_m) = adm^2-cNb = m$ to get	
		\begin{align*}
		D &= b^2\gamma^2 + a^2m^2\delta^2 + c^2N^2\beta^2+d^2m^2\alpha^2\\
		&-2b\gamma a m \delta + 2b\gamma cN\beta-2b\gamma d m \alpha -2am\delta c N \beta+ 2adm^2\alpha \delta- 2cN\beta d m \alpha \\
		&-4(m\alpha \delta-m\beta \gamma).
		\end{align*}
		Factoring and using that $\det(A) = \alpha\delta-\beta\gamma = 1$ gives that 
		\begin{align*}
		D &= (b\gamma -am\delta +cN\beta -dm\alpha)^2-4m.
		\end{align*}
		Thus $D$ is a square mod $4m$. Since $\tau_1$ is in the upper half plane, we must have that $D < 0$. However, since $(b\gamma -am\delta +cN\beta -dm\alpha)^2$ is non-negative, it follows that  $-4m \leq D < 0$.
	\end{proof}
	
	\begin{Ex}
		We return to the curve \begin{equation}
		E: y^2 + xy + y = x^3-x^2-3x+3 \tag{26b1}
		\end{equation}
		of conductor $26$ and index $42$. Consider the points $(1,-2)$ and $(3,2)$ with inverses $(1,0)$ and $(3,-6)$ on $E$. Then the functions $F$ and $G$ given by 
		\[F(z) = \frac{Y(z)-0}{X(z)-1}, \quad G(z) =  \frac{Y(z)+6}{X(z)-3}\]
		have simple poles for $z$ such that  $(X(z),Y(z)) =  (1,-2)$ or $(3,2)$ respectively. We calculate specific coefficient functions of $\Phi_{F} = \sum A_i(z)x^i$ and $\Phi_{G} = B_i(z)x^i$ to determine the location of these poles in the upper half plane:
		
		\begin{align*}
		A_{41}(z) &= \frac{-j(z)^2+288156\cdot j(z)-199626768}{j(z)-287496}, \\ B_{40}(z) &= \frac{j(z)^3 - 3214\cdot j(z)^2 + 2726620\cdot j - 274323456}{j(z) - 1728}.
		\end{align*}
		
		\noindent Thus $\Phi_F(z)$ has poles only when $j(z) = 287496$, i.e when $z$ is in the $\SL_2(\zz)$ orbit of $\sqrt{-4}$, and $G(z)$ has poles only when $j(z) = 1728$ i.e when $z$ is in the $\SL_2(\zz)$ orbit of $\sqrt{-1}$. Comparing the actions of the coset representatives of $\Gamma_0(26)$, we find that $z_0 := \frac{-5+\sqrt{-1}}{52}$ satisfies $(X(z),Y(z)) = (1,-2)$, and $z_1 = \frac{5+\sqrt{-1}}{13}$ satisfies $(X(z),Y(z)) = (3,2)$. 
		
		Examining the action of the Atkin-Lehner involutions $W_{2}$ and $W_{13}$, we find that $F_2 = F(W_2 z)$ , and $G_2 = G(W_2z)$  have coefficient functions

				\[A_{40}(z) = \frac{-j(z)^2+3235\cdot j(z) -2655936}{j(z)-1728}, \quad \quad
				B_{41}(z) =  \frac{-42\cdot j(z) + 21954240}{j(z)-287496}, \]
			while $F_{13} := F(W_{13}z)$ and $G_{13} := G(W_{13}z)$ have coefficient functions
			\begin{align*}
			A_{41}(z) &= \frac{-j(z)^2+288156\cdot j(z)-199626768}{j(z)-287496},\\
			B_{40}(z) &= \frac{j(z)^3 - 3214\cdot j(z)^2 + 2726620\cdot j - 274323456}{j(z) - 1728}.
			\end{align*}
\noindent Thus since $W_{2}$ exchanges the poles of $F$ and $G$, Theorem $1.2$ gives that the points $z_0$, $z_1$ correspond to isogenous elliptic curves on $X_0(26)$. Additionally, since $W_{13}$ fixes $z_0$ and $z_1$, Theorem $1.2$ also tells us they are both CM points on $X_0(26)$ whose orders have discriminants that must be squares mod $52$. In fact, the minimal polynomial of $z_0$ is $104z^2-20z+1$ which has discriminant $-16 \equiv 6^2 \mod 52$, and the minimal polynomial for $z_1$ is $13z^2-10z+2$ which has discriminant  $-4 \equiv 10^2 \mod 52$.
	\end{Ex}
\begin{Ex}
	Theorem $1.2$ can also be visualized in the following way. Consider again the elliptic curve $E: y^2 + y = x^3-x^2-10x-20$ of conductor $11$, and the fundamental domain $F_{11}$ in figure $1$ for the congruence subgroup $\Gamma_0(11)$.
	
	\begin{figure}[h]
		\centering
		\begin{minipage}{.5\textwidth}
			\centering
			\includegraphics[width=1\linewidth]{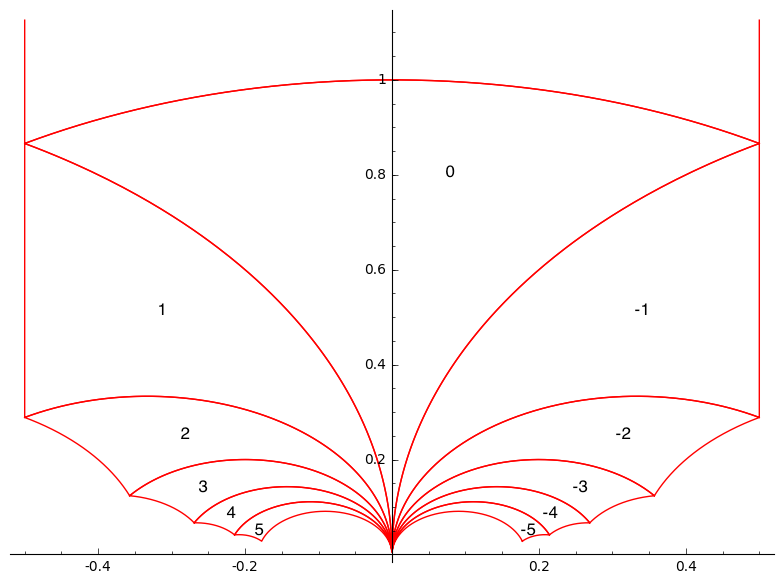}
			\captionof{figure}{fundamental domain $F_{11}$ for $\Gamma_0(11)$}
			\label{fig:test1}
		\end{minipage}%
		\begin{minipage}{.5\textwidth}
			\centering
			\includegraphics[width=1\linewidth]{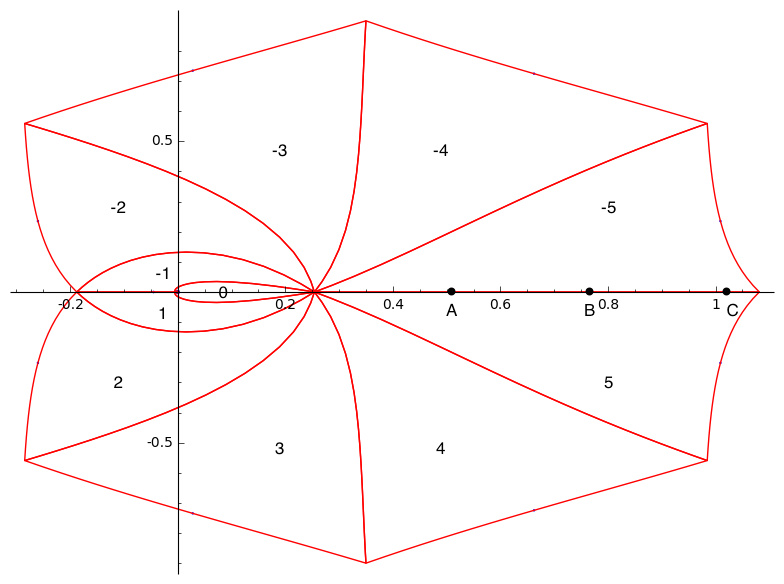}
			\captionof{figure}{Eichler integral over the boundary of $F_{11} $}
			\label{fig:test2}
		\end{minipage}
	\end{figure}

 This fundamental domain has been constructed by taking $\SL_2(\zz)$ coset representatives of the form $\left(\begin{smallmatrix}
	0 & -1 \\
	1 & j
	\end{smallmatrix}\right)$ for $-5\leq j \leq 5$, with each $j$ labeled in the corresponding hypertriangle. The associated newform of $E$ is $f_{E} = q-2q^2-q^3+2q^4\dots$. Taking complex values $z$ on the boundary of $F_{11}$ and calculating $\varepsilon(z) = \int_{z}^{i\infty}mf_E(\tau)d\tau$ gives the image in Figure $2$. The resulting image tiles the plane in a parallelogram-type pattern, with the same periods as $E$.
	The points $A,B$ and $C$ have been labeled at $2/5$, $3/5$ and $4/5$ times the real period of $E$ respectively. They correspond to the points $(5,-6), (5,5)$ and $(16,60)$ on $E$ respectively. The action of $W_{11}$ interchanges the two cusps in Figure $2$ ($\infty$ located at the origin, and $0$ located at the value $.2538\dots$ on the real line which is $1/5$ the real period of $E$). Up to translation by the real period, we see that $W_{11}$ interchanges the points $A$ and $C$ but fixes point $B$. By Theorem $1.2$ we conclude that the preimages of the points $(5,-6)$ and $(16,60)$ on $X_0(11)$ give isogenous elliptic curves, while the preimage of $(5,5)$ on $X_0(11)$ must be a CM point as we saw in Example $1.1$.
	\end{Ex}

	\section{Congruences Between Generated Algebras}
 Consider the elliptic curves $E_1$, $E_2$ given by
	\begin{align*}
	E_1: y^2 + xy + y &= x^3+4x-6, \tag{\textnormal{14a1}}\label{E14a1}\\
	E_2: y^2 + xy + y &= x^3 - 36x-70. \tag{\textnormal{14a2}}\label{E14a2}
	\end{align*}
	These curves have coefficients that are congruent mod $8$ and interestingly, if we look at the $q$-expansions of the row reduced basis elements of $\qq[X(z),Y(z)]$, we notice a similar phenomenon. \newline
	\begin{center}
	\begin{tabular}{|c|cccccc|}
		\hline
		\rule{0pt}{3ex}Basis over $E_1,$ $X = X_{E_1}(z)$, $Y = Y_{E_1}(z)$& \multicolumn{6}{c|}{$q$-expansion} \\ \hline
		\rule{0pt}{3ex}$1$&   &   &   & $1$  &  &   \\ \hline
		\rule{0pt}{3ex}$X(z)-2$&  $q^{-2}$ & $+q^{-1}$  &$+2q$   &$+2q^2$  &$+3q^3$  &$+\cdots  $\\ \hline
		\rule{0pt}{3ex}$-Y(z)-2X(z)-2$&  $q^{-3}$ &$+2q^{-1}$   &$+5q$   &$+4q^2$  &$+2q^3$&$+\cdots$  \\ \hline
		\rule{0pt}{3ex}$X(z)^2+2Y(z)-X(z)+2$&$q^{-4}$   &$-q^{-1}$   &$-2q$   &$+8q^2$  &$+5q^3$ &$+\cdots$  \\ \hline
		\rule{0pt}{3ex}$-Y(z)X(z)-3X(z)^2+2Y(z)+3X(z)-2$&$q^{-5}$   &   &$-2q$   &$-4q^2$  &$+18q^3$  &$+\cdots$  \\ \hline
		\rule{0pt}{3ex}$X(z)^3+3X(z)Y(z)-5Y(z)+2X(z)-6$ & $q^{-6}$ &$-2q^{-1}$ &$+4q$ &$ -7q^2$&$-6q^3$ &$+\cdots$ \\ \hline 
		\multicolumn{7}{}{} \\
		
		\hline
		\rule{0pt}{3ex}Basis over $E_2,$ $X = X_{E_2}(z)$, $Y = Y_{E_2}(z)$& \multicolumn{6}{c|}{$q$-expansion} \\ \hline
		\rule{0pt}{3ex}$1$&   &   &   & $1$  &  &   \\ \hline
		\rule{0pt}{3ex}$X(z)-2$&  $q^{-2}$ & $+q^{-1}$  &$+2q$   &$10q^2$  &$-5q^3$  &$+\cdots  $\\ \hline
		\rule{0pt}{3ex}$-Y(z)-2X(z)-2$&  $q^{-3}$ &$+2q^{-1}$   &$-3q$   &$-4q^2$  &$+2q^3$&$+\cdots$  \\ \hline
		\rule{0pt}{3ex}$X(z)^2+2Y(z)-X(z)-14$&$q^{-4}$   &$-q^{-1}$   &$+14q$   &  &$+29q^3$ &$+\cdots$  \\ \hline
		\rule{0pt}{3ex}$-Y(z)X(z)-3X(z)^2+2Y(z)+3X(z)+38$&$q^{-5}$   &   &$+6q$   &$-28q^2$  &$-14q^3$  &$+\cdots$  \\ \hline
		\rule{0pt}{3ex}$X(z)^3+3X(z)Y(z)-5Y(z)-22X(z)-6$ & $q^{-6}$ &$-2q^{-1}$ &$-12q$ &$ +25q^2$&$+138q^3$ &$+\cdots$ \\ \hline
		\multicolumn{7}{}{} \\
		
	\end{tabular} \newline
\end{center}

	\noindent The coefficients of the $q$-expansions are also congruent mod $8$.
	This is not simply a consequence of the congruence of the equations of $E_1$ and $E_2$. For example, the curves 
		\begin{align*}
E_3: y^2 + xy + y &= x^3+x^2-5x+2, \tag{\textnormal{15a3}}\\
E_4: y^2 + xy + y &= x^3 +x^2+35x-28. \tag{\textnormal{15a4}}
\end{align*}
	are congruent mod $10$, but the $q$ expansions of the $X$ term of their optimal modular parametrizations are
	\begin{align*}
	 X_{E_3}(z) &= q^{-2} + q^{-1} + 1 +2q + 3q^2+\hspace{6px} q^3 + \cdots  -6q^{11}+\cdots, \\
	X_{E_4}(z) &=  q^{-2}+q^{-1} +1 + 2q -5q^2+9q^3 + \cdots +7q^{11}+\cdots.
	\end{align*}
	Comparing the $q^{2}$ terms shows that any congruence between these two parametrizations must divide $8$, and comparing the $q^{11}$ terms shows that any such congruence must divide $13$. Thus we conclude that there are \textit{no} nontrivial congruences between the parametrizations. So when do congruences in the elliptic curve equation give rise to congruences in the generated algebras?

	If we assume that the two elliptic curves $E_1$ and $E_2$ given by 
	\begin{align*}
	E_1: y^2 + a_1xy + a_3y &= x^3 + a_2x^2 + a_4x+a_6,\\
	E_2: y^2 + \alpha_1xy + \alpha_3 y &= x^3 + \alpha_2x^2 + \alpha_4x + \alpha_6,
	\end{align*}
	are isogenous, then their period lattices will intersect nontrivially in a lattice $\Lambda_3$, corresponding to an elliptic curve $E_3$ with integral model \[y^2 +\beta_1xy + \beta_3y = x^3 + \beta_2x^2 + \beta_4 x + \beta_6.\] Thus the difference 
		\[g(z) :=\wp(z,\Lambda_1) - \wp(z,\Lambda_2)\] 
	
	\noindent is an even, elliptic function with period lattice $\Lambda_3$. If we let $\{r_i\}$ represent the complex numbers such that $\wp(r_i,\Lambda_3)$ is a zero of $g(z)$ in a fundamental parallelogram of $\Lambda_3$ and let $\{t_j\}$ be the values in $\Lambda_3$ such that $\wp(t_j,\Lambda_3)$ is a pole of $g(z)$ (repeated according to multiplicities) except possibly at the origin (even if the origin is a zero or pole of $g$), then the function 
	\[\frac{\prod_{i}\left(\wp(z,\Lambda_3)-\wp(r_i,\Lambda_3)\right)}{\prod_{j}(\wp(z,\Lambda_3)-\wp(t_j,\Lambda_3))}\] 
	
	\noindent is monic, and has the same zeros and poles as $g(z)$ except possibly at $0$. However, a classical arguement shows that the product must have the same zero or pole as $g(z)$ at $0$ as well (see \cite{koblitz} for example). 
\noindent Thus 
	\begin{equation}\label{ProducForm}
	g(z) = \wp(z,\Lambda_1) - \wp(z,\Lambda_2) = C\frac{\prod_{i}(\wp(z,\Lambda_3)-\wp(r_i,\Lambda_3))}{\prod_{j}(\wp(z,\Lambda_3)-\wp(t_j,\Lambda_3))}
	\end{equation}
	for some constant $C$. 	Since 
	\[\wp(z,\Lambda_1)-\wp(z,\Lambda_2) = \frac{g_2(\Lambda_1)-g_2(\Lambda_2)}{20}z^2 + \frac{g_3(\Lambda_1)-g_3(\Lambda_2)}{28}z^4+ \cdots\]
	
	\noindent we see that 
	
	\[C = C(\Lambda_1,\Lambda_2) = \begin{cases}
	\frac{g_2(\Lambda_1)-g_2(\Lambda_2)}{20} & \text{ if } g_2(\Lambda_1) \neq g_2(\Lambda_2)\\
	\frac{g_3(\Lambda_1)-g_3(\Lambda_2)}{28} & \text{ if } g_2(\Lambda_1) = g_2(\Lambda_2).
	\end{cases}\] 
	 With this notation we have the following. 
		\begin{Th}
		Suppose that $E_1,E_2$ are two isogenous elliptic curves over $\qq$. Also assume that the coordinates of the torsion points of order dividing $N$ in $\overline{\qq}$ are algebraic integers. Then there is an explicit natural number $D(\Lambda_1,\Lambda_2)$ so that the $q$-expansion of $X_{E_1}- X_{E_2}$ is congruent to a constant mod $C(\Lambda_1,\Lambda_2)/D(\Lambda_1,\Lambda_2)$.
	\end{Th}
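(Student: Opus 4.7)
The plan is to use the product formula $(\ref{ProducForm})$ to write
\[
g(z) = \wp(z,\Lambda_1) - \wp(z,\Lambda_2) = C(\Lambda_1,\Lambda_2)\cdot \frac{P(\wp(z,\Lambda_3))}{S(\wp(z,\Lambda_3))},
\]
where $P(X)=\prod_i (X - \wp(r_i,\Lambda_3))$ and $S(X)=\prod_j (X - \wp(t_j,\Lambda_3))$ are monic polynomials whose coefficients are elementary symmetric functions of the values $\wp(r_i,\Lambda_3)$ and $\wp(t_j,\Lambda_3)$. The poles of $g$ on $E_3$ are the nonzero images of $\Lambda_1\cup\Lambda_2$ in $\cc/\Lambda_3$, which are torsion points of $E_3$ whose order divides $N$, and the zero set $\{r_i\}$ is Galois-stable. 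Under the torsion-integrality hypothesis, each $\wp(r_i,\Lambda_3)$ and $\wp(t_j,\Lambda_3)$ is an algebraic integer. Because $C$ is rational (a linear combination of the rational invariants $g_2,g_3$ of the two curves) and $g(\varepsilon(\tau))$ has rational Fourier coefficients, the elementary symmetric polynomials of these $\wp$-values are simultaneously algebraic integers and Galois-invariant, hence elements of $\zz$. Thus $P,S\in\zz[X]$ are monic with integer coefficients.

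Next I would substitute $z=\varepsilon(\tau)$ and use the identity $\wp(\varepsilon(\tau),\Lambda_3)=m^{-2}(X_{E_3}(\tau)+c_3)$ coming from the definition of $X_{E_3}$, where $c_3=(a_{1,3}^2+4a_{2,3})/12$. After multiplying top and bottom by appropriate powers of $m^2$, one obtains
\[
\frac{P(\wp(\varepsilon(\tau),\Lambda_3))}{S(\wp(\varepsilon(\tau),\Lambda_3))} = m^{2(\deg S-\deg P)}\cdot\frac{\bar{P}(X_{E_3}(\tau))}{\bar{S}(X_{E_3}(\tau))},
\]
where $\bar{P},\bar{S}\in\qq[Y]$ are monic with coefficients whose denominators divide a fixed power $D_0$ of $12m$. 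Polynomial division in $\qq[Y]$ yields $\bar{P}/\bar{S}=Q+R/\bar{S}$ with $\deg R<\deg\bar{S}$ and $Q,R\in\tfrac{1}{D_0}\zz[Y]$. Because $X_{E_3}(\tau)\in\zz((q))$ is monic in $q^{-1}$ with leading term $q^{-2}$, both $Q(X_{E_3}(\tau))$ and the formal Laurent inverse $\bar{S}(X_{E_3}(\tau))^{-1}$ have $q$-expansions whose coefficient denominators divide a computable power of $D_0$.

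Setting $D(\Lambda_1,\Lambda_2)$ equal to the least common multiple of all denominators so produced --- an explicit product of powers of $m$ and $12$ depending on $\deg P$ and $\deg S$ --- one concludes that $g(\varepsilon(\tau))=(C/D)\,h(\tau)$ for some $h\in\zz((q))$, and therefore
\[
X_{E_1}(\tau) - X_{E_2}(\tau) \;=\; m^2\,g(\varepsilon(\tau)) \;+\; \frac{(a_{1,2}^2+4a_{2,2})-(a_{1,1}^2+4a_{2,1})}{12}
\]
is congruent to the indicated rational constant modulo $C/D$ once the $m^2$ factor is absorbed into $D$.

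The main obstacle will be to make the explicit formula for $D(\Lambda_1,\Lambda_2)$ both clean and sharp. In particular, one must check that the negative-index Fourier coefficients arising from the polynomial part $Q(X_{E_3})$ (which appears whenever $\deg P\geq \deg S$) also land in the ideal $(C/D)$, and track the contributions from $\bar{S}(X_{E_3})^{-1}$ carefully enough that the constant term of $X_{E_1}-X_{E_2}$ is the \emph{only} Fourier coefficient not divisible by $C/D$. The requisite $D$ can be read off from the degrees of the isogenies $E_3\to E_i$, the Manin constant, and the denominators of $c_3$ in the integral models involved.
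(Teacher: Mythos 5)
Your skeleton (evaluate the product formula \eqref{ProducForm} at $\varepsilon(z)$, rewrite everything in terms of $X_{E_3}$, and clear denominators into $D$) is the paper's route, but the two key integrality claims on which your explicit $D$ rests are wrong. First, the torsion-integrality hypothesis controls only the \emph{poles} $t_j$ of $g$: these come from the image of $\Lambda_1\cup\Lambda_2$ in $\cc/\Lambda_3$ and hence are torsion points. The \emph{zeros} $r_i$ of $\wp(z,\Lambda_1)-\wp(z,\Lambda_2)$ are not torsion points in general, so nothing forces $\wp(r_i,\Lambda_3)$ to be an algebraic integer, and your assertion that $P(X)=\prod_i\bigl(X-\wp(r_i,\Lambda_3)\bigr)$ lies in $\zz[X]$ fails: in the paper's own Example 4.3 the numerator is $-\bigl(X_{E_3}-\tfrac34\bigr)\bigl(X_{E_3}-\tfrac32\bigr)$. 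The whole point of the factor $D(\Lambda_1,\Lambda_2)$ in the theorem is to clear exactly these denominators of the zero-values $R_i$ (the paper sets $D=\prod_i D_i$ with $D_i R_i$ an algebraic integer); these denominators are intrinsic to the zeros and are not produced by, nor bounded by, powers of $12m$ from the substitution $\wp=m^{-2}(X_{E_3}+c_3)$, so your proposed $D$ (an explicit product of powers of $m$ and $12$) does not prove the statement.

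Second, even granting your $\bar S$, the step ``the formal Laurent inverse $\bar S(X_{E_3})^{-1}$ has coefficient denominators dividing a computable power of $D_0$'' is false: inverting a series of the form $q^{-2s}\bigl(1+\sum_{n\ge1}a_nq^n\bigr)$ with $a_n\in\tfrac1{D_0}\zz$ produces coefficients whose denominators grow like $D_0^n$, so no fixed modulus survives across all Fourier coefficients. The repair is the paper's observation that the additive constant relating $\wp(\cdot,\Lambda_3)$ to $X_{E_3}$ cancels inside each factor, i.e.\ $\wp(\varepsilon(z),\Lambda_3)-\wp(t_j,\Lambda_3)=X_{E_3}-T_j$ with $T_j$ the $x$-coordinate of a torsion point, an algebraic integer by hypothesis; then $\prod_j(X_{E_3}-T_j)^{-1}$ has algebraic-integer coefficients outright, and only the numerator's $R_i$ need clearing, yielding the congruence modulo $C/D$ with $D=\prod_i D_i$. (A small additional point: since $g(z)=O(z^2)$ at $z=0$, one always has $\deg P<\deg S$, so the polynomial part $Q$ you planned to analyze never occurs.)
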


	\begin{proof}
		Evaluating equation \eqref{ProducForm} at $\varepsilon(z)$, and adding the appropriate constant to both sides of the equality gives
	\begin{align*}
	X_{E_1}(z) - X_{E_2}(z) &= \wp(\varepsilon(z),\Lambda_1) + \frac{a_1^2-4a_2}{12} - \wp(\varepsilon(z),\Lambda_2) - \frac{\alpha_1^2-4\alpha_2}{12} \\ &=  C\frac{\prod_{i}(\wp(\varepsilon(z),\Lambda_3)-\wp(r_i,\Lambda_3))}{\prod_{j}(\wp(\varepsilon(z),\Lambda_3)-\wp(t_j,\Lambda_3))} + \frac{a_1^2-\alpha_1^2 +4\alpha_2 - 4a_2}{12}\\
	&= C \frac{\prod_i X_{E_3} - R_i}{\prod_j X_{E_3} - T_j} + \frac{a_1^2-\alpha_1^2 +4\alpha_2 - 4a_2}{12}\\
	\end{align*}
	where $R_i = \wp(r_i,\Lambda_3)-\frac{\beta_1^2-4\beta_2}{12}$ and $T_j = \wp(t_j,\Lambda_3)-\frac{\beta_1^2-4\beta_2}{12}$. The final equality follows from In fact that $X_{E_3} = \wp(z,\Lambda_3)+\frac{\beta_{1}^2-4\beta_4}{12}$ so that the fraction cancels out of the $X_{E_3}$ term and the $R_i$ or $T_j$ term. 
	
	The $T_j$'s are $x$-coordinates of torsion points of order dividing $N$ because the poles of $g(z)$ occur at lattice points of either $\Lambda_1$ or $\Lambda_2$. By hypothesis, these coordinates are algebraic integers. Since the $q$-expansions of both $X_{E_1}$ and $X_{E_2}$ are both integers, we also have that each of $\wp(r_i,\Lambda_3)$ must be algebraic. So we define $D = D(\Lambda_1,\Lambda_2) = \prod_i D_i$ where $D_i$ is the minimal natural number so that $D_iR_i$ is an algebraic integer.Thus 
	\[X_{E_1}(z)-X_{E_2}(z) = \frac{C}{D} \frac{\prod_{i}D_iX_{E_3}-D_iR_i}{\prod_jX_{E_3}-T_j}.\] Since the formal product $(\prod_j X_{E_3} - T_j)^{-1}$ has algebraic integer coefficients, and since $D_iR_i$ is an algebraic integer for all $i$, the above shows that all but the constant term of the $q$-expansion of $X_{E_1}(z)-X_{E_2}(z)$ are congruent to zero mod $C/D$. 
	\end{proof}
	\begin{Ex}
		Let's return to the curves $E_1$, $E_2$  (Cremona labels \ref{E14a1} and \ref{E14a2}) where we found a congruence mod $8$ between the $q$-expansions for their modular parametrizations. The period lattices for $E_1,E_2$ are given by the generators 
		$$(z_{11},z_{12}) \approx (1.981341, .990670 +1.325491 i),\quad (z_{21},z_{22}) \approx (.990670, 1.325491 i),$$
		
		\noindent and so we see that $\Lambda_{E_1} \subseteq \Lambda_{E_2}$. So we can write $\wp(z,\Lambda_2)$ as a rational function in $\wp(z,\Lambda_1)$. A quick calculation shows that in fact,
		\[\wp(z,\Lambda_1)-\wp(z,\Lambda_2) = \frac{8}{13/12-\wp(z,\Lambda_1)}.\]
		Since $X_{E_1}(z) = \wp(\varepsilon(z),\Lambda_1) -1/12$, we conclude that 
	\[ X_{E_1}(z) - X_{E_2}(z) = \frac{8}{1-X_{E_1}}.\]
	Since $X_{E_1}$ has integer coefficients, this makes the congruence mod $8$ between $X_{E_1}$ and $X_{E_2}$ now apparent. 
 \end{Ex}
\begin{Ex}
	Using Theorem $4.1$ we can now see why the curves 
		\begin{align*}
	E_3: y^2 + xy + y &= x^3+x^2-5x+2 \tag{\textnormal{15a3}},\\
	E_4: y^2 + xy + y &= x^3 +x^2+35x-28. \tag{\textnormal{15a4}},
	\end{align*}
	had only the trivial congruence mod $1$ even though their expressions share a congruence mod $10$.
	These curves are isogenous and $\Lambda_3 \subseteq \Lambda_4$, so we can write the difference $X_{E_4}-X_{E_3}$ as a rational funtion in terms of $X_{E_3}$. Since $g_2(\Lambda_{E_3})/20 = 241/240$ and $g_2(\Lambda_{E_4})/20 = -1679/240$, we see that $C = (241+1679)/240 = 8$. Also, we compute that
	\begin{align*}
	X_{E_4}-X_{E_3} =C \frac{-(X_{E_3}-\frac{3}{4})(X_{E_3}-\frac{3}{2})}{(X_{E_3}-1)(X_{E_3})^2}.
	\end{align*}
	So we see that $D = 8$ as well. Thus $C/D = 1$.
	\end{Ex}
	While Theorem $4.1$ describes many congruent algebras, it does not describe all congruences that we noticed computationally on curves of conductor less than $100$. For example, the curves 
	
	\begin{align*}
	E_1: y^2 &= x^3+x^2-32x+60 \tag{\textnormal{96a3}}\\
	E_2: y^2  &= x^3 + x^2 -384x +2772. \tag{\textnormal{48a5}}
	\end{align*}
	are not isogenous over $\qq$, so Theorem $4.1$ doesn't tell us of any congruences between the two algebras. However, looking at the difference of the $q$-expansions of the modular parametrizations of the $x$ coordinates of these two curves gives 
	\[-68q + 780q^3 - 5020q^5 + 24140q^7 - 96712q^9 + 340500q^{11} - 1086568q^{13}+\textnormal{O}(q^{15}).\]
	
	\noindent So we see that this form appears to be $0$ mod $4$. In fact, computationally we can confirm that a large number of coefficients are divisible by $4$. We would like to be able to tell that all of the coefficients are congruent to $0$ by looking at some finite number of terms in the $q$-expansion. To this end, we give a generalization of Sturm's bound that applies to meromorphic modular forms. The arguement is essentially the same, but we give a proof for completeness. For a modular form with $q$-expansion $f = \sum a_n q^n$ we denote 
	\[\ord_{\mathfrak{p}} f := \ord_{\infty}(f\bmod \mathfrak{p}) = \min\{n : a_n \not \in \mathfrak{p}\}\]
	and observe that since $\mathfrak{p}$ is a prime ideal, $\ord_\mathfrak{p}(fg) = \ord_\mathfrak{p}(f)+\ord_\mathfrak{p}(g)$.  
	We also denote by $M_k^{!!}(\Gamma,\mathcal{O})$ the collection of meromorphic modular forms of weight $k$ over $\Gamma$ with coefficients in $\mathcal{O}$. Finally, let $f^{[\gamma]_k}$ denote $(cz+d)^{-k}f(\gamma z)$ where $\gamma = \left(\begin{smallmatrix}
	a & b \\
	c&d \\
	\end{smallmatrix} \right)\in SL_2(\zz)$. With this notation we prove the following.
	\begin{Lemma}
		Let $\mathfrak{p}$ be a prime ideal in the ring of integers $\mathcal{O}$ of a number field $K$. Further suppose that $f \in M_k^{!!}(\Gamma,\mathcal{O})$ and $|\Gamma\backslash \SL_2(\zz)| = m$. Finally, let $\Omega$ be the set of points on $X_0(N)$ where $f$ has poles. Then 
		\[\textnormal{ord}_{\mathfrak{p}}(f) + \sum_{\tau \in \Omega} \textnormal{ord}_{\tau}(f) > \frac{km}{12}\]
		implies that $f\equiv 0 \pmod{\mathfrak{p}}$.
		
	\end{Lemma}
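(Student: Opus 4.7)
My plan is to adapt the standard proof of Sturm's bound, which applies only to holomorphic modular forms, by first clearing the poles of $f$ via multiplication with a carefully chosen holomorphic auxiliary form $h\in M_{k'}(\Gamma,\mathcal{O})$, and then applying the ordinary Sturm bound to the holomorphic product $fh$.

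Set $n_\tau := -\ord_\tau(f)>0$ for each $\tau\in\Omega$ and $T := \sum_{\tau\in\Omega} n_\tau$, so that the hypothesis reads $\ord_\mathfrak{p}(f) > \tfrac{km}{12} + T$. I would construct $h$ so that $\ord_\tau(h)\geq n_\tau$ for every $\tau\in\Omega$ (so that $fh$ is holomorphic on $X_\Gamma$), the leading Fourier coefficient of $h$ at $\infty$ is a $\mathfrak{p}$-unit (so that $\ord_\mathfrak{p}(h)=0$), and the weight $k'$ of $h$ satisfies $k'm/12 = T$, the minimum value permitted by the valence formula. Natural building blocks for $h$ are powers of $\Delta(z)$ (and of $\Delta(Nz)$) to absorb cuspidal poles, and weight-$12$ forms like $\Delta(z)\bigl(j(z)-j(\tau_0)\bigr)$ to kill interior poles $\tau_0$. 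Alternatively, the $\Gamma$-modular functions $X_\lambda(z),Y_\lambda(z)$ of the previous sections can be combined into rational expressions vanishing exactly at a prescribed $\tau$, then promoted to holomorphic forms by multiplying by enough copies of $\Delta$.

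Given such $h$, the product $fh\in M_{k+k'}(\Gamma,\mathcal{O})$ is holomorphic, and the classical Sturm bound (the $\Omega=\emptyset$ case of the lemma, which I would invoke as known) yields
\[
\ord_\mathfrak{p}(fh) > \tfrac{(k+k')m}{12} \ \Longrightarrow\ fh\equiv 0\pmod{\mathfrak{p}}.
\]
Since $\ord_\mathfrak{p}(h)=0$ and $k'm/12 = T = -\sum_\tau\ord_\tau(f)$, the premise rewrites as $\ord_\mathfrak{p}(f) > \tfrac{km}{12} - \sum_{\tau\in\Omega}\ord_\tau(f)$, which is exactly our hypothesis. Hence $fh\equiv 0\pmod{\mathfrak{p}}$, and because the leading $q$-coefficient of $h$ is a unit modulo $\mathfrak{p}$, this forces $f\equiv 0\pmod{\mathfrak{p}}$ inside $(\mathcal{O}/\mathfrak{p})((q))$, finishing the proof.

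The main obstacle is producing $h$ at the \emph{exact} minimal weight: any zero of $h$ outside $\Omega$ inflates $k'$ and correspondingly strengthens the Sturm hypothesis that would be required. Cuspidal poles are routine via $\eta$-quotients, but the naive choice $\Delta(z)(j(z)-j(\tau_0))$ for an interior pole $\tau_0$ vanishes at every $X_\Gamma$-preimage of $j(\tau_0)\in X(1)$, not just at $\tau_0$. One remedy is to use finer $\Gamma$-level constructions --- for instance, ratios of modular polynomials $\Phi_F(x)$ evaluated at carefully chosen $F\in\qq(X_\lambda,Y_\lambda)$ --- to single out the required vanishing. A cleaner fallback is the norm reduction: $N(f):=\prod_{\gamma\in\Gamma\backslash\SL_2(\zz)}f^{[\gamma]_k}$ is a meromorphic form of weight $km$ on $\SL_2(\zz)$, and inside $\cc[E_4,E_6]$ the auxiliary form is mechanical to construct; the cost is bookkeeping of how $\ord_\mathfrak{p}$ and the pole sum transfer under the degree-$m$ covering $X_\Gamma\to X(1)$.
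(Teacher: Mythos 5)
Your main construction has a genuine gap, and it is precisely the obstacle you flag at the end: the auxiliary form $h$ of \emph{exactly} minimal weight generally does not exist. You need $h\in M_{k'}(\Gamma,\mathcal{O})$ whose zeros are exactly the poles of $f$ with exactly the multiplicities $n_\tau$ (extra vanishing is only harmless at the cusp $\infty$ itself, where it raises $\ord_{\mathfrak{p}}(fh)$ and the Sturm threshold by the same amount). But on $X_0(N)$ of positive genus an effective divisor of degree $k'm/12$ is the zero divisor of a weight-$k'$ form only when its class in the Jacobian is the one dictated by $k'$, and the polar divisor of $f$ satisfies no such constraint (only the full divisor of $f$ does). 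Any surplus zero away from $\infty$ --- e.g.\ the zeros of $\Delta(z)\bigl(j(z)-j(\tau_0)\bigr)$ at the other $\Gamma$-preimages of $j(\tau_0)$, or zeros at cusps other than $\infty$ --- inflates $k'$ and hence the bound $(k+k')m/12$ without contributing to $\ord_{\mathfrak{p}}(fh)$, so the lemma's hypothesis no longer implies the Sturm hypothesis for $fh$. There are further unaddressed points: if a pole of $f$ lies at an elliptic point the ``minimal weight'' need not even be an integer, and even when the divisor is realizable you must still produce $h$ with coefficients in $\mathcal{O}$ and $\mathfrak{p}$-unit leading coefficient. Your first remedy does not help: functions built from $X_\lambda,Y_\lambda$ or from the coefficients of $\Phi_F$ have divisors pulled back from $E$ or from the $j$-line, so they cannot cut out a single prescribed point of $X_0(N)$ with prescribed multiplicity any better than $j-j(\tau_0)$ can.

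Your fallback --- the norm $\prod_{\gamma\in\Gamma\backslash\SL_2(\zz)}f^{[\gamma]_k}$ --- is in fact the route the paper takes, but the ``bookkeeping'' you defer is where the actual work lies. The paper first proves the $\SL_2(\zz)$ case not with a holomorphic $h$ but by multiplying $f$ by weight-zero factors $(\beta_i j-\alpha_i)^{-\ord_{\tau_i}f}$, pushing all poles to $\infty$, where the weakly holomorphic Sturm count absorbs them; then, for general $\Gamma$, it passes to $\Gamma(N)$, shows the slashed forms $f^{[\gamma_\ell]_k}$ have bounded denominators in $K(\zeta_N)$ (via finite-dimensionality of the subspace with the prescribed poles), normalizes each factor by a constant $A_\ell$ so that $\ord_{\mathfrak{P}}\bigl(A_\ell f^{[\gamma_\ell]_k}\bigr)=0$ for a prime $\mathfrak{P}$ of $\mathcal{O}_{K(\zeta_N)}$ above $\mathfrak{p}$, and compares $\ord_{\mathfrak{P}}$ with $\ord_{\mathfrak{p}}$ before applying the level-one case to the product. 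As written, your proposal does not prove the lemma; carried to completion, its fallback essentially reproduces the paper's argument.
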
 
	\begin{proof}
		We start with the case $\Gamma = \textnormal{SL}_2(\zz)$. We first note that since $f$ is meromorphic, $\textnormal{ord}_\tau f < \infty$ for all $\tau \in \Omega$. Also, since the coefficients of $f$ are elements of $\mathcal{O}$, for each of the finite complex numbers $\tau_i \in \Omega \cap \Gamma\backslash\mathcal{H}$, we can pick relatively prime algebraic integers $\alpha_i$, $\beta_i$ so that $\beta_ij(z)-\alpha_i$ has a zero of order at least $1$ at $\tau_i$. So 
		\[g(z) := f(z) \prod_{i}(\beta_ij(z)-\alpha_i)^{-\ord_{\tau_i}f}\]
		has poles only at infinity, and is modular over $\textnormal{SL}_2(\zz)$. Thus Sturm's theorem applies giving $g(z) \equiv 0 \mod \mathfrak{p}$ since 
		\begin{align*}
		\textnormal{ord}_{\pideal}(g) &= \textnormal{ord}_{\pideal}(f) - \sum_{\tau_i \in \Omega}\ord_{\tau_i}(f_i)\textnormal{ord}_{\pideal} (\beta_i j +\alpha_i) \\
		&\geq \textnormal{ord}_{\pideal}(f) + \sum_{\tau_i \in \Omega} \textnormal{ord}_{\tau_i}(f) > \frac{k}{12}.
		\end{align*} 
		
		\noindent The first inequality holds since $\alpha_i$ and $\beta_i$ are relatively prime algebraic integers in $\mathcal{O}$, implies that each of the terms $(\beta_i j +\alpha_i)$ has order $0, -1$ mod $\mathfrak{p}$ corresponding to $\beta_i \in \pp$ or not. Thus
		$g \equiv 0 \pmod{\mathfrak{p}}$ which implies that $f\equiv 0 \pmod{\pp}$. 
		This concludes the proof in the case that $\Gamma = \textnormal{SL}_2(\zz)$.
		
		If $\Gamma$ is an arbitrary congruence subgroup, we first pick $N$ so that $\Gamma(N) \subseteq \Gamma$ with $m$ coset representatives $\gamma_\ell$ for $\Gamma(N)$ and we set $L = K(\zeta_N)$. Since $f \in M_k^{!!}(\Gamma(N),L)$ and $\Gamma(N)$ is a normal subgroup of $\SL_2(\zz)$, the functions  $f^{[\gamma_\ell]_k}$ are elements of $M_k^{!!}(\Gamma(N),L)$. Furthermore, the denominators of the fourier coefficients of $f^{[\gamma_\ell]_k}$ are bounded because each is a finite $L$-linear combination of some integral basis of a finite dimensional subspace of $M_k^{!!}(\Gamma(N),L)$. Note that in general $M^{!!}_k(\Gamma(N),L)$ is not finite dimensional; however, if we restrict ourselves to the subspace that has poles of the same order and at the same locations as those of $f$ and $f^{[\gamma_\ell]_k}$, then this subspace is finite dimensional. Thus we can pick constants $A_\ell \in L^{\times}$ so that each of the functions $\ord_{\mathfrak{P}}(A_\ell f^{[\gamma_\ell]_k}) = 0$ for some prime ideal $\mathfrak{P}$ lying over $\pideal$. Letting $\gamma_1$ be the identity matrix, the function 
		\[G(z) := f(z) \prod_{\ell=2}^m A_\ell f^{[\gamma_\ell]_k}\]
		is a meromorphic modular form of weight $km$ over $\textnormal{SL}_2(\zz)$ with coefficients in $\mathcal{O}_L$. Then  \begin{align*}
		\ord_{\Pideal}(G) \geq  \ord_{\pideal}(G) \geq \ord_{\pideal}(f) + \sum_{\tau \in \Omega} \ord_\tau(f) > \frac{km}{12},
		\end{align*}
		where the first equality follows because $\mathfrak{P} \cap \mathcal{O}_K = \pideal$.
		We conclude that $G \equiv 0 \pmod{\mathfrak{P}}$ from the $\SL_2(\zz)$ case. Since each of the functions $A_{\gamma_\ell}f^{[\gamma_\ell]_k}$ were chosen such that $\ord_{\mathfrak{P}}(A_\ell f^{[\gamma_\ell]_k}) = 0$, this gives $G \equiv 0 \pmod{\pp}$ and so $f \equiv 0 \pmod{\pp}$. See theorem $9.18$ in \cite{Stein} to compare the above to the proof of Sturm's theorem for elements of $M_k(\Gamma,\mathcal{O})$.
	\end{proof}
	
	\begin{Cor}
		If $X_{E_1}$ and $X_{E_2}$ are modular parametrizations for the $x$ coordiantes of elliptic curves $E_1$ and $E_2$ of conductor $N_1$ and $N_2$ with modular degrees $d_1$ and $d_2$ respectively, then if $\ord_{p}(X_{E_1}-X_{E_2}) > 2(d_1+d_2)$, then $X_{E_1}\equiv X_{E_2} \mod p$.
	\end{Cor}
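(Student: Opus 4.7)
The plan is to apply Lemma 4.2 directly to $f(z) := X_{E_1}(z) - X_{E_2}(z)$, regarded as a meromorphic modular function of weight $k=0$ with integer Fourier coefficients. I would view $f$ as an element of $M_0^{!!}(\Gamma,\zz)$ for $\Gamma = \Gamma_0(N)$ with $N = \mathrm{lcm}(N_1,N_2)$, so that both $X_{E_1}$ and $X_{E_2}$ live on the common modular curve $X_0(N)$.

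The first observation is decisive: because the weight is $k=0$, the Sturm-type term $km/12$ on the right-hand side of Lemma 4.2 vanishes, so the Lemma collapses to the statement
\[
\ord_p(f) \ >\ -\sum_{\tau\in\Omega}\ord_\tau(f) \ \Longrightarrow\ f\equiv 0 \pmod{p},
\]
where the right-hand side is simply the total pole order of $f$ on $X_0(N)$. The proof therefore reduces to showing that this total pole order is at most $2(d_1+d_2)$.

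To obtain that bound, I would use the factorization $X_{E_i} = x_i \circ \pi_i$, where $\pi_i\colon X_0(N_i)\to E_i$ is the modular parametrization of degree $d_i$ (the modular degree) and $x_i\colon E_i\to\mathbb{P}^1$ is the $x$-coordinate, which has degree $2$ with a unique double pole at $O_{E_i}$. Hence $X_{E_i}$ has degree $2d_i$ as a map to $\mathbb{P}^1$, and its pole divisor on $X_0(N_i)$ has total order $2d_i$. Viewing $f$ on $X_0(N)$, its pole divisor is supported on the union of those of $X_{E_1}$ and $X_{E_2}$, giving a total pole order of at most $2(d_1+d_2)$. Substituting this into the weight-zero form of Lemma 4.2 yields the implication $\ord_p(f) > 2(d_1+d_2) \Longrightarrow X_{E_1}\equiv X_{E_2}\pmod{p}$, which is precisely the Corollary.

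The main technical point of care is the bookkeeping of pole orders across the two modular curves $X_0(N_i)$: under the pull-back $X_0(N)\to X_0(N_i)$ of degree $r_i = [\Gamma_0(N_i):\Gamma_0(N)]$ the total pole order of $X_{E_i}$ formally scales by $r_i$, so the clean bound $2(d_1+d_2)$ requires that $d_i$ be read as the degree of $X_{E_i}$ on the common curve $X_0(N)$ (equivalently, it is exact when $N_1 = N_2$). Once this accounting is fixed, the argument is a direct, essentially one-line invocation of Lemma 4.2 in the weight-zero case.
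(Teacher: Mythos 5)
Your proof is correct and takes essentially the same route as the paper: apply the meromorphic Sturm bound (the paper's Lemma 4.4) to $X_{E_1}-X_{E_2}$ over $\Gamma_0(\mathrm{lcm}(N_1,N_2))$ in weight $k=0$, where the bound $km/12$ vanishes, and bound the total pole order by $2(d_1+d_2)$ using that $X_{E_i}=x_i\circ\pi_i$ has degree $2d_i$. Your closing caveat--that pole orders scale by $[\Gamma_0(N_i):\Gamma_0(N)]$ when $N_1\neq N_2$, so $2(d_1+d_2)$ is only exact with the degrees read on the common curve--is a genuine subtlety the paper silently glosses over (it even applies the corollary to conductors $96$ and $48$), so your write-up is, if anything, more careful than the original.
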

	\begin{proof}
		The number of poles of $X_{E_i}$ is at most $2d_i$ counting multiplicities. Thus the corollary follows immediately from Theorem $4.4$ applied to the difference $X_{E_1} - X_{E_2}$ which is modular over $\Gamma_0(\text{lcm}\left(N_1,N_2\right))$ since 
		\[\ord_p(X_{E_1}-X_{E_2}) + \sum_{\tau \in \omega}\ord_\tau(X_{E_1}-X_{E_2}) > 2(d_1+d_2)-2(d_1+d_2) = 0 = \frac{km}{12}.\]
			\end{proof}
	 Note that this bound is independent of both $N_1$ and $N_2$ since the weight $k$ of the modular parametrizations is zero. We obtain a better estimate if we know a priori the locations of the poles of $X_{E_i}$ and if they cancel in the difference $X_{E_1}-X_{E_2}$.

	Corollary $4.4$ gives us an easy way for determining if two related parametrizations are congruent mod $\pp$. Returning to our earlier example with the curves  \begin{align*}
	E_1: y^2 &= x^3+x^2-32x+60 \tag{\textnormal{96a3}},\\
	E_2: y^2  &= x^3 + x^2 -384x +2772 \tag{\textnormal{48a5}},
	\end{align*}
	since the modular degree of both $E_1$ and $E_2$ is $8$, computing $2(8+8) = 32$ coefficients of the difference function and observing that they are congruent to $0$ mod $4$ is sufficient to prove that all of the coefficients are congruent mod $4$. 

	\bibliographystyle{plain}
	\bibliography{ModularParametrizations}

\end{document}